\def\beq{\begin{equation}}
\def\eeq{\end{equation}}
\def\Ei{{\rm Ei}}
\def\sharp{\#}
\def\vphi{\varphi}
\def\({\left(}
\def\){\right)}
\def\SL{SL_2\(\mathbb{Z}\)}
\g@addto@macro\bfseries{\boldmath}
\renewcommand*\env@matrix[1][\arraystretch]{%
  \edef\arraystretch{#1}%
  \hskip -\arraycolsep
  \let\@ifnextchar\new@ifnextchar
  \array{*\c@MaxMatrixCols c}}
\theoremstyle{plain}
\newtheorem{theorem}{Theorem}
\newtheorem{corollary}{Corollary}
\newtheorem{conjecture}{Conjecture}
\theoremstyle{remark}
\newtheorem{remark}{Remark}
\newtheorem{example}{Example}
\begin{document}

\title{Some Riemann Hypotheses from Random Walks over Primes}

\author{Guilherme Fran\c ca}
\email{guifranca@gmail.com}
\affiliation{Cornell University, Physics Department, Ithaca, NY 14850}
\affiliation{Johns Hopkins University, Baltimore, MD 21218}

\author{Andr\' e  LeClair}
\email{andre.leclair@gmail.com}
\affiliation{Cornell University,  Physics Department, Ithaca, NY 14850}

\begin{abstract}
The aim of this article  is to investigate how various
Riemann Hypotheses would
follow only from properties of the prime numbers. To this end,
we consider two classes of \mbox{$L$-functions}, namely,
non-principal Dirichlet and those based on cusp forms.
The simplest example of the latter is based on the Ramanujan tau arithmetic
function.
For both classes we prove that if a particular  trigonometric series involving
sums of
multiplicative characters over primes  is $O(\sqrt{N})$,  then the
Euler product converges in the right half of the critical strip.
When this result is combined with the functional equation,  the non-trivial
zeros are constrained to lie on the critical line.
We argue that this $\sqrt{N}$ growth is a consequence of the series
behaving like a one-dimensional random walk.
Based on these results we obtain an equation which relates every
individual non-trivial zero of the $L$-function to a sum involving all
the primes.
Finally,  we briefly mention
important differences
for principal Dirichlet $L$-functions  due to the existence of
the pole at $s=1$, in which the Riemann $\zeta$-function
is a particular case.
\end{abstract}


\maketitle

\section{Introduction}

Montgomery conjectured that
the pair correlation function between the ordinates of  the Riemann zeros
on the critical line  satisfy
the GUE statistics of random
matrix theory \cite{Montgomery}.  On the other hand, Riemann \cite{Riemann}
obtained an exact
formula for the
prime number counting function $\pi(x)$ in terms of the
non-trivial zeros of $\zeta(s)$.  This suggests that if the Riemann
Hypothesis is true, then this should imply some
kind of randomness of the primes. It has been remarked  by many  authors that
the primes appear random, and this is sometimes referred to as
pseudo-randomness of the primes \cite{Tao}.

In this article we address the following question, which is
effectively the reverse of the previous paragraph.
What kind of specific pseudo-randomness of the primes  would imply
the Riemann Hypothesis?
This requires a concrete characterization of the pseudo-randomness.
We provide such a characterization by arguing that certain deterministic
trigonometric sums over primes, involving  multiplicative functions,
behave like random walks,  namely grow as $\sqrt{N}$.
However, we are not able to fully prove this
$\sqrt{N}$ growth,   and thus  we will take it  as a conjecture.
This conjecture may appear to be reminiscent of Merten's false conjecture
that $|\sum_{n=1}^N  \mu(n) | < \sqrt{N}$,
where $\mu(n)$ is the M\"obius function.
However, it is different in an important manner:
our series involves a sum over primes rather than integers which in
some sense renders it more random.

The main result of this paper can be stated as follows.
Consider $L$-functions based on non-principal Dirichlet characters
and on cusp forms. We prove that, assuming the claim from the previous
paragraph concerning the random walk behavior,
the Euler product converges to the
right of the critical line.

This article is partly  based on the ideas in \cite{EulerProduct}
and is intended
to clarify it with more precise statements.
There is an important difference
between the cases mentioned above and  principal Dirichlet $L$-functions,
where $\zeta(s)$ is a particular case, and this is emphasized more here.
We will not consider this latter case in detail,  but
we briefly mention these subtleties in the last section of the paper.

\section{On the growth  of  series of Multiplicative Functions over primes}
\label{characters}

In this section we consider the asymptotic growth of certain
trigonometric sums over primes involving
multiplicative arithmetic functions. We propose that these sums
have the same growth as one-dimensional random walks.

Let $c(n)$ be a multiplicative function, i.e. $c(1) = 1$ and
$c(m n) = c(m) c(n)$ if $m$ and $n$ are coprime integers,
and let $p$ denote an
arbitrary prime number. We can always write
$c(p) = |c(p)| e^{i \theta_p}$.
Now consider the trigonometric sum
\beq\label{CN}
C_N = \sum_{n=1}^{N} \cos \theta_{p_n}
\eeq
where $p_n$ denotes the $n$th prime; $p_1 = 2$, $p_2=3$,
and so forth. We wish to estimate
the size of this sum,  specifically  how its growth depends on $N$.

\subsection{Non-Principal Dirichlet Characters}

\subsubsection{The Main Conjecture}

Now let $c(n)=\chi(n)$ be a Dirichlet character modulo $k$,  where $k$ is
a positive integer.
The function $\chi$ is completely  multiplicative, i.e. $\chi(1) = 1$,
$\chi( m n) = \chi(m) \chi(n)$ for all $m, n \in \mathbb{Z}^{+}$, and
obeys the periodicity $\chi(n) = \chi(n+k)$. Its values are either
$\chi(n) = 0$, or $|\chi(n)| = 1$ if and only
if $n$ is coprime to $k$.
For a given $k$ there are $\varphi(k)$ different characters
which can be labeled as
$\{\chi_1, \dotsc, \chi_{\varphi(k)}\}$. The arithmetic
function $\varphi(k)$ is the Euler totient.   We will  omit the index of
the character except for $\chi_1$ which denotes
the \emph{principal} character, defined as $\chi_1(n) = 1$ if $n$ is coprime
to $k$, and $\chi_1(n)=0$ otherwise.
The Riemann $\zeta$-function corresponds  to the trivial principal character
with $k=1$.

For a non-principal character
the non-zero elements correspond to $\varphi(k)$-th roots
of unity given by
$\chi(n) \equiv e^{i\theta_n} = e^{2\pi i \nu_n / \varphi(k)}$
for some $\nu_n \in \mathbb{Z}$.
The \emph{distinct} phases of these roots of unity form a discrete and finite
set denoted by
\beq\label{PhiDef}
\theta_n \in \Phi \equiv \left\{ \phi_1, \phi_2, \dotsc, \phi_r \right\},
\qquad \mbox{where $r \leq \varphi(k)$.}
\eeq
Here $r$  is the order of the character.
For $k$ prime,  $r = \varphi (k) = k-1$.

For our purposes,
there is an important distinction between principal verses non-principal
characters.
The principal characters  satisfy
\beq \label{prin}
\sum_{n=1}^{k-1}  \chi_{1} (n)  =   \varphi(k) \neq 0,
\eeq
while non-principal characters satisfy
\beq
\label{chisum}
\sum_{n=1}^{k-1}  \chi(n) = 0.
\eeq
The above relation \eqref{chisum} shows that the angles in $\Phi$ are equally
spaced over the unit circle for non-principal characters.
On the other hand, this is not the case for
principal characters due to \eqref{prin};
in fact the angles $\theta_n$ are all zero.

For the sake of clarity, let us now simply state the main hypothesis
that the remainder of this work relies  upon.
We cannot prove this conjecture,   however we
will subsequently provide supporting,  although heuristic,  arguments.

\begin{conjecture}\label{ConjDir}
Let $p_n$ be the $n$th prime and $\chi(p_n) = e^{i\theta_{p_n}} \neq 0$
the value
of a non-principal Dirichlet character modulo $k$. Consider  the series
\beq
\label{C_dir}
C_N = \sum_{\substack{n=1 \\ p_n \nmid k}}^{N} \cos \theta_{p_n}.
\eeq
Then $C_N = O(\sqrt{N})$ as $N\to\infty$, up to logs.
By the latter we mean, for instance,
$C_N = O(\sqrt{N}  \log^a N)$ for any  positive power $a$,
or $C_N = O(\sqrt{N  \log \log N})$,  etc.  suffices.
\end{conjecture}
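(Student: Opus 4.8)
The natural line of attack is to reduce Conjecture~\ref{ConjDir} to a cancellation statement for sums of Dirichlet characters over primes and then to extract the square-root law from the conjectural ``randomness'' of the non-trivial zeros of $L(s,\chi)$. Since $\chi(p_n)=e^{i\theta_{p_n}}$ has modulus $1$ whenever it is non-zero, $\cos\theta_{p_n}=\mathrm{Re}\,\chi(p_n)$ and therefore
\beq
C_N=\mathrm{Re}\sum_{\substack{p\le p_N\\ \chi(p)\neq0}}\chi(p).
\eeq
Writing $x=p_N\sim N\log N$, the bound $C_N=O(\sqrt N)$ is the statement $\sum_{p\le x}\chi(p)=O(\sqrt{x/\log x})$ for every non-principal $\chi$ (and likewise for the imaginary part, to which the same argument applies).

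\textbf{Step 1: explicit formula and the GRH baseline.} By partial summation, controlling $\sum_{p\le x}\chi(p)$ is essentially equivalent --- up to the logarithmic factors that are exactly what is at issue --- to controlling the Chebyshev-type sum $\psi(x,\chi)=\sum_{n\le x}\Lambda(n)\chi(n)$, which satisfies the explicit formula $\psi(x,\chi)=-\sum_{\rho}x^{\rho}/\rho+O(\log^2x)$ summed over the non-trivial zeros $\rho=\beta+i\gamma$ of $L(s,\chi)$. Assuming the Generalized Riemann Hypothesis for $L(s,\chi)$ one has $\beta=\tfrac12$ for all $\rho$, so the triangle inequality together with the density bound $\#\{\rho:|\gamma|\le T\}\ll T\log T$ gives $\psi(x,\chi)=O(\sqrt x\,\log^2x)$ and hence $\sum_{p\le x}\chi(p)=O(\sqrt x\,\log x)=O(\sqrt N\,(\log N)^{3/2})$. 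Thus GRH alone already yields $C_N=O(N^{1/2+\varepsilon})$ for every $\varepsilon>0$, and the real content of the Conjecture is the removal of the surplus factor $(\log N)^{3/2}$.

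\textbf{Step 2: the random-walk input.} These surplus logarithms come from bounding $|\sum_\rho x^\rho/\rho|$ term by term. A genuine random-walk size for $C_N$ is equivalent to square-root cancellation among the oscillatory terms $x^{i\gamma}$, i.e.\ to the Montgomery-type heuristic that the ordinates $\gamma$ are GUE-distributed and sufficiently uncorrelated. Alternatively, and in the spirit of the present paper, one may model the phases directly: Dirichlet's theorem on primes in arithmetic progressions makes $\chi(p_n)$ equidistributed over the roots of unity $e^{i\phi}$, $\phi\in\Phi$, so the $\cos\theta_{p_n}$ are asymptotically distributed like i.i.d.\ mean-zero bounded random variables of some positive variance $\sigma^2$; if in addition the correlation sums $\sum_{n\le N}\cos\theta_{p_n}\cos\theta_{p_{n+h}}$, and their analogues for products of several index-shifted factors, carry no main term, then the law of the iterated logarithm gives $C_N=O(\sqrt{N\log\log N})$ --- which is the claimed bound up to the $\sqrt{\log\log N}$ that a true random walk cannot avoid.

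\textbf{The main obstacle.} The crux is Step~2: neither the cancellation required in $\sum_\rho x^\rho/\rho$ nor the decorrelation of the deterministic sequence $\{\theta_{p_n}\}$ is within reach of current methods, since the sums $\sum_{n\le N}\cos\theta_{p_n}\cos\theta_{p_{n+h}}$ involve values of Dirichlet characters at pairs of primes with a prescribed gap $p_{n+h}-p_n$ \emph{in the index} --- a problem of the same depth as the Hardy--Littlewood prime-pair and Chowla-type conjectures, and not implied even by GRH. I therefore expect that the most one can rigorously establish along these lines is (i) the conditional bound $C_N=O(N^{1/2+\varepsilon})$ of Step~1, and (ii) unconditional averaged versions --- square-root cancellation for almost all $x$, or on average over the characters modulo $k$, or over moduli $k$ in a dyadic range --- obtained from mean-value theorems (the large sieve, Montgomery--Vaughan) for $L$-functions. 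Turning any of these into the pointwise bound of Conjecture~\ref{ConjDir} for a \emph{fixed} character is, as emphasized in the Introduction, beyond present techniques, which is precisely why it is stated here as a conjecture.
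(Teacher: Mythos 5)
The statement is labelled a \emph{conjecture}, and the paper itself says explicitly ``We cannot prove this conjecture, however we will subsequently provide supporting, although heuristic, arguments.'' You correctly treat it as unprovable by current methods, so on the most basic level your write-up is consistent with the paper's stance. But your supporting analysis takes a genuinely different route than the paper's. The paper's heuristics are entirely probabilistic on the prime side: Dirichlet's theorem on primes in arithmetic progressions gives uniform equidistribution of $\theta_p$ over $\Phi$ (the paper's Theorem~\ref{prob_angle}), the Kac heuristic gives pairwise independence of the events $\theta_p=\phi_i$ and $\theta_q=\phi_j$, and then a one--shot central-limit-theorem model gives $\tilde C_N=O(\sqrt N)$ ``with high probability'' for a surrogate iid sequence. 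You instead approach from the $L$-function side: rewrite $C_N=\mathrm{Re}\sum_{p\le p_N}\chi(p)$, pass to $\psi(x,\chi)$ via partial summation, invoke the explicit formula and GRH to get $O(N^{1/2+\varepsilon})$, and then identify the remaining logarithms as requiring square-root cancellation among $x^{i\gamma}$, which you correctly relate to Montgomery-type zero correlations and, on the prime side, to pair-correlation sums $\sum_n\cos\theta_{p_n}\cos\theta_{p_{n+h}}$ of Chowla/Hardy--Littlewood depth. That latter diagnosis is a real and useful addition: the paper never makes explicit that its independence assumption is, in rigorous terms, of that difficulty class, nor that GRH alone falls a power of $\log$ short of the conjectured bound.

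One point worth flagging, because it is a small but genuine discrepancy with the paper's statement: you note that a \emph{bona fide} random walk satisfies the law of the iterated logarithm, so even a fully successful random-walk modelling would only yield $C_N=O(\sqrt{N\log\log N})$, not $C_N=O(\sqrt N)$ in the literal unconditional $O$-sense. The paper's CLT argument only controls $\tilde C_N/\sqrt N$ at a fixed $N$ ``with probability $1-\epsilon$'' and quietly promotes this to an asymptotic $O(\sqrt N)$, which the model itself does not support almost surely. You are right that the honest model-consistent bound carries an extra $\sqrt{\log\log N}$. Fortunately this does not damage the downstream use in Theorem~\ref{DirConvergence}: there the conjecture is used only to show $|B_N|/p_N^\sigma\to0$ and $\sum_n |B_n|\,g_n/p_n^{\sigma+1}<\infty$ for $\sigma>1/2$, and both survive an extra factor $\sqrt{\log\log N}$ in the bound on $B_N$. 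So your observation sharpens the conjecture's statement without undermining the paper's conditional results, and it would be reasonable to restate Conjecture~\ref{ConjDir} with the iterated-logarithm factor, or at least to note that the random-walk heuristic strictly speaking supports only that weaker form.
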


The main supporting argument is an  analogy with one-dimensional random
walks,  which  are known to grow as $\sqrt{N}$.
Although the series $C_N$ is completely deterministic,
its random aspect stems from the pseudo-randomness
of the primes,  which is largely a consequence of
their multiplicative independence.
The event of an integer being divisible by a prime $p$
and also divisible by a different prime $q$ are mutually independent.
A  simple argument is Kac's heuristic \cite{Kac}:
let $P_m(n)$ denote the probability that an integer $m$ is  divisible by
$n$. The probability that $m$ is even, i.e. divisible by $2$,
is $P_m(2) = 1/2$. Similarly, $P_m(n) = 1/n$.
We therefore have $P_m(p \, q) = (p\, q)^{-1} = P_m(p) P_m(q)$,
and the events are independent.
Because of the multiplicative property of $c(n)$
this independence of the primes  extends  to quantities involving
$c(p)$,   in that  $c(p)$  is  independent of $c(q)$ for primes
$p \neq q$.
Moreover, if $\{\theta_{p}\}$ are equidistributed over a finite set of
possible angles, then the deterministic sum \eqref{CN} is expected to behave
like a random walk since each term $\cos\theta_p$ mimics an independent
and identically distributed (iid) random variable.
Analogously, if we build a random model capturing
the main features of \eqref{CN} it should provide an accurate description
of some of its important  global properties.

Let us provide a more detailed argument.
First a theorem of Dirichlet addresses the identically distributed aspect.

\begin{theorem}
[Dirichlet] \label{prob_angle}
Let $\chi(n)=e^{i\theta_n} \ne 0$  be a non-principal
Dirichlet character modulo $k$ and $\pi (x)$  the number of
primes less than $x$.
These distinct roots of unity form a finite and discrete set,
$\theta_n \in \Phi = \{ \phi_1,\phi_2, \dotsc, \phi_{r}\}$ with
$r \leq \varphi(k)$.  Then for a prime $p$ we have
\beq \label{RW1}
F(\theta_p=\phi_i) = \lim_{x \to \infty} \dfrac{
\sharp \left\{ p \leq x:  \, \theta_p = \phi_i \right\}}{\pi(x)}
=  \dfrac{1}{r}
\eeq
for all $i=1,2,\dotsc,r$,
where $F(\theta_p=\phi_i)$ denotes the frequency of the event
$\theta_p=\phi_i$ occurring.
\end{theorem}
\begin{proof}
Let $[a_i]$   denote the residue classes modulo $k$
for $a_i$ and $k$ coprime,
namely the set of integers  $[a_i] = \{ a_i  \ \text{mod}~k \}$.
There are $\varphi (k)$  independent classes
and they form a group.
Of these  classes let the set of integers  $[ a_i ]$  denote the particular
residue class where   $\chi (a_i \ \text{mod}~k)  =  e^{i \phi_i}$.
Then
\beq
F(\theta_p =  \phi_i )  =   F(p =  a_i  \ \text{mod}~k).
\eeq
Dirichlet's theorem states that there are an infinite number of primes
in arithmetic progressions,
and $F(p=a_i) = 1/r $  independent of $a_i$.   In particular,
\beq
\label{DirDave}
\pi (x, a, k ) =        \sharp \left\{  p< x, p \equiv a \ \text{mod}~k  \quad
\text{with} \,  (a,k) = 1 \right\}   =   \frac{ \pi (x) }{\vphi (k) }
\eeq
in the limit $x \to \infty$.
(See for instance \cite[Chap. 22]{Davenport}.) 
\end{proof}

The frequencies $F$ can be interpreted as probabilities,
however we will continue to refer to them
as frequencies.
Next consider  the joint frequency,   defined by
\beq \label{RW2}
F(\theta_p = \phi_i, \theta_q = \phi_j ) =
\lim_{x \to \infty} \dfrac{ \sharp \left\{ p,q \leq x : \,
\theta_p = \phi_i \ \mbox{and} \ \theta_q = \phi_j \right\}}{\pi^2(x)}
\eeq
for all $i,j=1,2,\dotsc,r$.
The events $p=a_i $ and $q=a_j $   ($ {\rm mod} ~ k)$  are independent due to
the multiplicative independence of the primes.
Thus one expects
\beq
\label{CondP}
\begin{split}
F( \theta_p = \phi_i , \theta_q = \phi_j ) &=
F(\theta_p = \phi_i |  \theta_q = \phi_j )  F(\theta_q = \phi_j)  \\
&= F(\theta_p = \phi_i ) F (\theta_q = \phi_j ) = \dfrac{1}{r^2}.
\end{split}
\eeq

In other words, for a randomly chosen prime,
each angle $\phi_i \in \Phi$ is equally
likely to  be the value of $\theta_p$,  i.e.,
$\theta_p$ is uniformly distributed over $\Phi$.
Moreover, $\theta_p$ and $\theta_q$ are independent.
Thus the series \eqref{C_dir} should  behave like a random walk,
and this is the  primary motivation for Conjecture~\ref{ConjDir}.

\begin{remark}
\label{remarkCram}
In \cite{ALCLT}  one of us studied a probabilistic
model for $C_N$ and proved a central limit theorem for it.
Namely,  in the definition of $C_N$,
the primes $\{p_1, p_2, \ldots\}$ were replaced with
$\{p'_1,  p'_2, \ldots \}$ where the $p'$ were chosen according
to Cram\'er's random model for the primes \cite{Cramer}.
$C_N /\sqrt{N}$ is now a random variable with a probability distribution,
which we showed to be a normal distribution
as $N\to \infty$.
The latter implies $C_N = O(N^{1/2 + \epsilon})$ for any $\epsilon > 0$ with
probability equal to $1$.
Also,  the law of iterated logarithm
implies $C_N = O(\sqrt{N\log \log N})$,
which as stated in Conjecture \ref{ConjDir},  will be sufficient for
our purposes.
\end{remark}

\subsubsection{Numerical Evidence}
\label{example_probs}

Let us also provide numerical evidence for the above statements.
In Figure~\ref{fig:prob}a we have an example with $k=7$.
The specific character is
\beq
\label{chi7}
\{ \chi(1), \dotsc, \chi(7) \} =
\{ 1, e^{ 2\pi i /3},e^{\pi i / 3}, e^{-2\pi i / 3}, e^{-\pi i / 3}, -1,0\}
\eeq
with $r  = 6$,
so  that $F(\theta_p=\phi_i) = 1/6 = 0.1666\dotsm$.
This table was computed with $x=10^9$ in \eqref{RW1}.
One can see the equally spaced angles over the unit circle, and the numerical
results verify that $\theta_p$ is uniformly distributed over $\Phi$, as
stated  in Theorem \ref{prob_angle}.

Let us also check \eqref{RW2}. All the joint
frequencies  are shown in the following matrix:
\beq 
F(\phi_i,\phi_j) = 10^{-2}\times \kbordermatrix{
& \phi_1 & \phi_2 & \phi_3 & \phi_4 & \phi_5 & \phi_6 \\
\phi_1
&2.7293
&2.7679
&2.7454
&2.7518
&2.7583
&2.7647 \\
\phi_2
&
&2.8071
&2.7842
&2.7908
&2.7973
&2.8038 \\
\phi_3
&
&
&2.7616
&2.7680
&2.7745
&2.7810 \\
\phi_4
&
&
&
&2.7745
&2.7810
&2.7875 \\
\phi_5
&
&
&
&
&2.7875
&2.7940 \\
\phi_6
&
&
&
&
&
&2.8006
}.
\eeq
Here we used only $x=5 \times 10^4$. These values are all  close
to the predicted theoretical value $F(\phi_i,\phi_j) = 
(1/6)^2 = 2.7777\dotsm \times 10^{-2}$, and get even closer
with higher $x$.

\begin{figure}
\begin{minipage}{.5\textwidth}
\centering
\vspace{.5em}
\begin{minipage}{.45\textwidth}
\includegraphics[width=\textwidth]{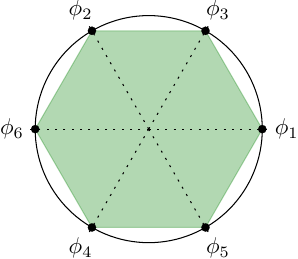}
\end{minipage}
\begin{minipage}{.5\textwidth}
\footnotesize
\flushleft\hspace{1em}
\renewcommand{\arraystretch}{.65}
\begin{tabular}{@{}ccc@{}} \toprule[1pt]
$n$ & $\phi_n$ & $F(\theta_p=\phi_n)$ \\ \midrule[.5pt]
$1$ & $0$ & $0.1666594293$ \\
$2$ & $2\pi/3$ & $0.1666707377$ \\
$3$ & $\pi/3$ & $0.1666771687$ \\
$4$ & $-2\pi/3$ & $0.1666554960$ \\
$5$ & $-\pi/3$ & $0.1666674730$ \\
$6$ & $\pi$ & $0.1666696757$ \\ \midrule[.5pt]
\multicolumn{3}{c}{$\sum_{n} F_n = 0.9999999803$} \\ \bottomrule[1pt]
\end{tabular}
\end{minipage}\\[1em]
(a)
\end{minipage}
\begin{minipage}{.49\textwidth}
\centering
\hspace{1em}\includegraphics[width=.9\textwidth]{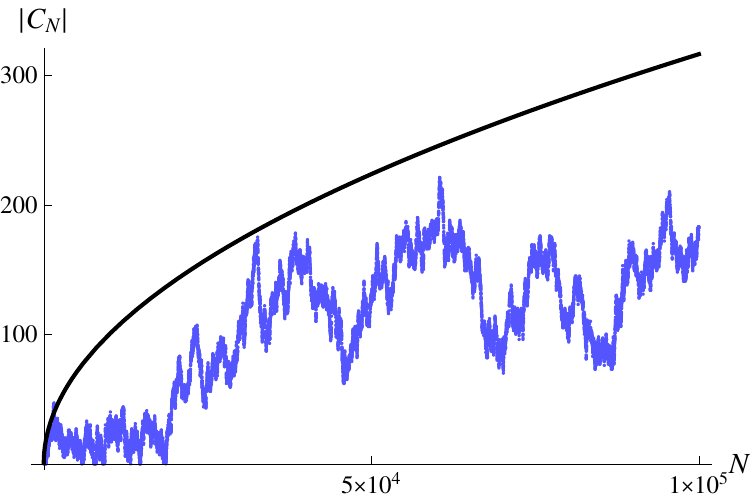} \\
\hspace{1em}(b)
\end{minipage}
\caption{\label{fig:prob} (a) Numerical verification of \eqref{RW1} with the
$k=7$ character in \eqref{chi7}.
Notice the equally spaced angles over the unit circle, and
the corresponding probabilities shown in the table. We use $x=10^9$.
(b) Numerical evidence for  \eqref{C_dir} (blue dots) with a $k=19$ character
shown in \eqref{chi19}.
We compare with the curve
$\sqrt{N}$ (solid black line).
}
\end{figure}

In Figure~\ref{fig:prob}b we provide evidence  that
\eqref{C_dir} is $O(\sqrt{N})$. Here we choose a Dirichlet character
with modulus $k=19$ as shown below:
\begin{equation}\label{chi19}
\{ \chi(1), \dotsc, \chi(19) \} = \{  
1, -1, -1, 1, 1, 1, 1, -1, 1, -1, 1, -1, -1, -1, -1, 1, 1, -1, 0
\}.
\end{equation}
The blue dots represent
the series \eqref{C_dir} and the solid black curve is $\sqrt{N}$.

\subsection{Fourier Coefficients of Cusp Forms}

Let us extend the above arguments to the Fourier coefficients
of cusp forms. We will briefly review the general Hecke theory
in Section~\ref{sec:modular} where we will explain the significance of
being a cusp form. The simplest and best-known example
is the weight $k=12$ modular form,  which is
the $24$th power of the Dedekind $\eta$-function
\beq \label{discriminant}
\Delta(z) = q \prod_{n=1}^\infty (1-q^n)^{24} =
\sum_{n=1}^\infty   \tau (n)  q^n ,
\eeq
where $q = e^{2\pi i z}$ and $\Im(z) > 0$.
Here  the  Fourier coefficients $\tau: \mathbb{N} \to \mathbb{Z}$
are known as the Ramanujan $\tau$ arithmetic function.
More generally, let us refer to the Fourier coefficients
of cusp forms as $c(n)$, where $c:\mathbb{N} \to \mathbb{R}$ is a
multiplicative function. Thus we can write
\beq
c(n) = |c(n)| \cos \theta_n, \qquad \mbox{with $\theta_n \in \{0, \pi\}$}.
\eeq
The series to consider is now
\beq \label{tau2}
C_N =  \sum_{n=1}^N \cos \theta_{p_n},
\qquad \mbox{ with $\cos \theta_{p_n} = \pm 1$},
\eeq
and resembles even more closely the original discrete random walk.
Let us assume that
Theorem  \ref{prob_angle} holds in the same way but now we
have $\Phi = \{ 0, \pi \}$,
and then $F(\theta_p = 0) = F(\theta_p = \pi) = 1/2$.
For any two primes $p$ and $q$ these two events are independent.
As a consequence we have the analog
of  Conjecture \ref{ConjDir}.

\begin{conjecture}\label{ConjCusp}
The sum \eqref{tau2} obeys the bound $C_N = O(\sqrt{N})$ as $N\to\infty$,
up possibly to logs, i.e. $C_N = O(\sqrt{N} \log^a N)$ for any $a > 0$,
or $C_N = O(\sqrt{N \log \log N})$, etc.
\end{conjecture}

\begin{remark}
Deligne \cite{Deligne} proved that $|c(p)| \le 2 p^{(k-1)/2}$. This implies
that we can write
\beq\label{Frobenius}
c(p) =  2 p^{(k-1)/2}  \cos \alpha_p
\eeq
where $\alpha_p$ is called a Frobenius angle.
The aspect of a uniform distribution in Theorem  \ref{prob_angle}
can be seen as a weaker form of Sato-Tate
conjecture \cite{SatoTate}. Whereas
\eqref{tau2} only
concerns the signs $\cos\theta_p = \pm 1$, Sato-Tate
is much more specific. It asserts that $\alpha_p$ in \eqref{Frobenius}
is uniformly distributed over $[0, \pi]$ according to the function
$\tfrac{2}{\pi} \sin^2 \beta$, with $\beta \in [0,\pi]$.
Thus, Sato-Tate conjecture would imply our assumption
that the signs of $c(p)$ are equally likely
to be \mbox{$+1$ or $-1$}.
\end{remark}

\section{Convergence of the Euler Product for
Non-Principal Dirichlet $L$-Functions}
\label{sec:convergence}

Let $s=\sigma + i t$ be a complex variable. Given a Dirichlet
character $\chi$ modulo $k$ we have the Dirichlet $L$-series
\beq
\label{Lfunct}
L(s, \chi) = \sum_{n=1}^\infty\dfrac{\chi (n)}{n^s}.
\eeq
The domain of convergence of
Dirichlet series are always half-planes. Such series converge
absolutely for $\sigma > \sigma_a$,  where $\sigma_a$ is referred to
as the abscissa of  absolute convergence. There is also an
abscissa of convergence $\sigma_c \leq \sigma_a $.
For all Dirichlet series \eqref{Lfunct} we have $\sigma_a = 1$.
The analytic properties of \eqref{Lfunct} are as follows. If $\chi$
is non-principal, then $L(s,\chi)$ is analytic in the half-plane $\sigma>0$,
with no poles. If $\chi=\chi_1$ is principal, then
$L(s,\chi_1)$ has a simple pole at $s=1$, but it is analytic everywhere
else.
In this case $\sigma_c = \sigma_a = 1$.
There is a functional equation relating $L(s,\chi)$ to
$\overline{L(1-\bar{s},\chi)}$,
thus the critical line is $\sigma =1/2$;
the critical strip is  the region $0\leq  \sigma \leq 1$ where
all  the non-trivial zeros lie.  From now on we consider only
non-principal characters, $\chi \ne \chi_1$.

Since there is no pole at $s=1$,
it is possible that $\sigma_c < \sigma_a$.
In fact $\sigma_c =0$. This is easy to see from the
Dirichlet's convergence test \cite[pp. 17]{Whittaker}.
Set $t=0$ and write $L(s,\chi) = \sum_n  \chi (n) \, \ell_n $
where $\ell_n = 1/n^\sigma$.
One has $\ell_n > \ell_{n+1}$ and $\lim_{n\to \infty}  \ell_n = 0$, if
$\sigma>0$.      Now,  due to \eqref{chisum},
$\big|  \sum_{n=1}^N \chi (n) \big|  \leq c$
for every integer $N$ and for some constant $c$.
In fact,
$c  =  \max_j  \big\{  \sum_{n=1}^j \chi (n) \big\}$ for
$j=1, 2,\dotsc, k-2$.
Thus, since convergence of Dirichlet series are always half-planes,
the series \eqref{Lfunct} converges for all complex $s$ with $\Re (s) >0$.

Due to the completely  multiplicative property of $\chi$
one has the Euler product formula
\beq
\label{EPFDir}
L(s, \chi )  =  \prod_{n=1}^\infty \( 1- \dfrac{\chi (p_n)}{p_n^{\,s}} \)^{-1}.
\eeq
Because \eqref{Lfunct} converges for $\Re(s)>0$ this opens
up the possibility that the product in
\eqref{EPFDir}
converges for $\Re (s) > \sigma_c$, for some $\sigma_c >0$, where now
we refer specifically to the abscissa of convergence of the product
in \eqref{EPFDir}.
We will argue that in this case
$\sigma_c = 1/2$ and
the equality between \eqref{EPFDir} and \eqref{Lfunct}
is valid for $\Re (s) > 1/2$ since
both sides of the equation converge in this region.

Taking the formal logarithm on both sides of \eqref{EPFDir},
and assuming the principal
branch, we have
$\log L(s,\chi) =  X(s,\chi) + R(s,\chi)$ where
\beq\label{PDir}
X(s,\chi) = \sum_{n=1}^{\infty} \dfrac{\chi(p_n)}{p_n^{\, s}}, \qquad
R(s,\chi) = \sum_{n=1}^{\infty} \sum_{m=2}^{\infty}
\dfrac{\chi(p_n)^{m}}{m p_n^{\, ms}}.
\eeq
Now $R(s,\chi)$ absolutely converges for $\sigma > 1/2$, therefore
\beq\label{logsum2}
\log L(s,\chi) = X(s,\chi) + O(1)
\eeq
and convergence of the Euler product to the right of the critical line
depends only on $X(s,\chi)$.
The next result shows that Conjecture~\ref{ConjDir} is sufficient
to ensure that $X(s, \chi)$ also converges in the half-plane $\sigma > 1/2$.

\begin{theorem}\label{DirConvergence}
Let $L(s,\chi)$ be a non-principal Dirichlet $L$-function.
Assuming Conjecture~\ref{ConjDir}, the 
Dirichlet series $X( s, \chi)$ defined in
\eqref{PDir} has abscissa of convergence $\sigma_c = 1/2$.
This implies that the Euler product \eqref{EPFDir} also has the
half-plane of convergence given by $\sigma > 1/2$.
\end{theorem}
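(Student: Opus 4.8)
The strategy is to establish the bound $\sigma_c = 1/2$ for $X(s,\chi)$ by a summation-by-parts argument that converts Conjecture~\ref{ConjDir} into convergence of the Dirichlet series, and then transfer the result to the Euler product via \eqref{logsum2}. First I would fix $s = \sigma + it$ with $\sigma > 1/2$ and write the partial sums of $X(s,\chi)$ as $S_N(s) = \sum_{n=1}^N \chi(p_n) p_n^{-s}$. Since $\chi(p_n) = e^{i\theta_{p_n}}$ and $p_n^{-s} = p_n^{-\sigma} e^{-it \log p_n}$, the phase is entangled; the clean way to decouple it is to treat $\chi(p_n) p_n^{-it}$ as the "coefficient" and $p_n^{-\sigma}$ as the monotone weight, applying Abel summation with respect to $p_n^{-\sigma}$. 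This reduces everything to controlling the twisted partial sums $T_N = \sum_{n=1}^N \chi(p_n) p_n^{-it}$.

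\textbf{Main steps.} The core step is to bound $T_N$ (or its real part, together with the analogous imaginary-part sum) using Conjecture~\ref{ConjDir}. The conjecture gives $C_N = \sum_{n \le N} \cos\theta_{p_n} = O(\sqrt N)$; by the same random-walk reasoning applied to $\sin\theta_{p_n}$ (or by noting $\sum \chi(p_n) = C_N + i\,(\text{sine sum})$ and that both components are $O(\sqrt N)$ under the hypothesis), one has $\sum_{n \le N}\chi(p_n) = O(\sqrt N)$. To incorporate the $p_n^{-it}$ factor I would perform a second Abel summation, now with the oscillatory-but-slowly-varying weight $p_n^{-it} = e^{-it\log p_n}$: the variation of this weight between consecutive primes is $O(|t|\, g_n / p_n)$, where $g_n = p_{n+1} - p_n$. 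Summing the error contributions then requires $\sum_{n \le N} g_n / p_n \ll \log^2 p_N / p_N \cdot N$-type control, which is exactly where Proposition~\ref{Gn} enters: it bounds $\sum_{n\le N} g_n$ by $\sum_{n \le N}\log^2 p_n$, and combined with $p_n \sim n\log n$ this keeps the accumulated error subpolynomial. The upshot should be $T_N = O(N^{1/2 + \epsilon})$ for every $\epsilon > 0$ (or $O(\sqrt N \,\mathrm{polylog})$), and then the first Abel summation against $p_n^{-\sigma}$ — writing $S_N(s) = T_N \, p_N^{-\sigma} + \sigma \sum_{n} T_n (p_n^{-\sigma} - p_{n+1}^{-\sigma})$ and using $p_n^{-\sigma} - p_{n+1}^{-\sigma} = O(g_n \, p_n^{-\sigma-1})$ — converges absolutely once $\sigma > 1/2$, since the summand is $O(n^{1/2+\epsilon} \cdot n^{-\sigma - 1 + o(1)}\, g_n)$ and $\sum g_n p_n^{-\sigma-1+o(1)} n^{1/2+\epsilon}$ converges for $\sigma > 1/2$ by Proposition~\ref{Gn} again. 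Hence $\sigma_c \le 1/2$.

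\textbf{Sharpness and transfer to the Euler product.} For the reverse inequality $\sigma_c \ge 1/2$ one argues that $X(s,\chi)$ cannot converge in any larger half-plane: since $L(s,\chi)$ is known to have zeros on the critical line $\sigma = 1/2$ (a consequence of the functional equation, or simply from the classical fact that infinitely many zeros lie there), $\log L(s,\chi)$ is singular at such points, and by \eqref{logsum2} so is $X(s,\chi)$; a convergent Dirichlet series is analytic strictly to the right of its abscissa, so $\sigma_c \ge 1/2$. Combining, $\sigma_c = 1/2$. Finally, the Euler product statement follows immediately: by \eqref{logsum2}, $\log L(s,\chi) = X(s,\chi) + O(1)$ with the $O(1)$ term coming from the absolutely convergent $R(s,\chi)$ for $\sigma > 1/2$, so convergence of $X(s,\chi)$ for $\sigma > 1/2$ gives convergence of $\log L(s,\chi)$, hence of $L(s,\chi)$ as a product, in that half-plane; and it cannot converge further right for the same singularity reason.

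\textbf{Main obstacle.} The delicate point is the handling of the $p_n^{-it}$ twist. Conjecture~\ref{ConjDir} is stated only for the untwisted sum $C_N$, so the argument must show that multiplying by the unimodular, slowly-oscillating factor $e^{-it\log p_n}$ does not destroy the $\sqrt N$ cancellation. The Abel-summation bookkeeping above is the natural route, but one must verify that the accumulated variation terms — controlled through Proposition~\ref{Gn} — are genuinely of lower order than $\sqrt N$ for each fixed $t$, and that the implied constants' dependence on $t$ is harmless (it only needs to be locally bounded in $t$, which is fine since convergence of a Dirichlet series is determined by a single abscissa). A secondary subtlety is that Conjecture~\ref{ConjDir} as literally stated bounds only the cosine sum; one should either invoke the identical heuristic for the sine sum or, more carefully, note that applying the conjecture to the character $\chi$ and to a suitable rotation $e^{i\gamma}\chi$ (still a scenario the random-walk argument covers) recovers control of both real and imaginary parts of $\sum \chi(p_n)$.
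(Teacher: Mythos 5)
Your proposal is \emph{correct in outline} but takes a genuinely different (and more laborious) route than the paper at precisely the step you flag as the ``main obstacle,'' namely the handling of the unimodular twist $p_n^{-it}$.

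The paper sidesteps the twist entirely. After writing the real part of the partial sum of $X(s,\chi)$ as $\sum_{n\le N} a_n b_n$ with $a_n=p_n^{-\sigma}$ and $b_n=\cos(t\log p_n-\theta_{p_n})$, and applying a single Abel summation against the monotone weight $a_n$, the paper simply sets $t=0$. Then $B_N=\sum_{n\le N} b_n$ is literally the sum $C_N$ of Conjecture~\ref{ConjDir}, no oscillatory factor survives, and the only inputs needed are $C_N=O(\sqrt N)$, the Baker--Harman--Pintz gap bound $g_n<p_n^{0.525}$ for the higher-order terms, and Proposition~\ref{Gn} together with $\int_1^\infty x^{-\sigma-1/2}\log^2 x\,dx<\infty$ for the leading gap term. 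This establishes convergence at every real point $\sigma_0>1/2$, and then a single appeal to the classical theorem that the region of convergence of a generalized Dirichlet series $\sum a_n e^{-\lambda_n s}$ (here $\lambda_n=\log p_n$) is a half-plane determined solely by its abscissa extends the result to all $s$ with $\Re(s)>1/2$. Your plan instead performs a second Abel summation against the slowly oscillating weight $e^{-it\log p_n}$, estimates its increments by $|t|\,g_n/p_n$, and then pushes the accumulated-variation error through Proposition~\ref{Gn}. That can be made to work --- the estimate $T_N=O(\sqrt N\cdot\mathrm{polylog})$ for fixed $t$ is plausible after one more Abel summation to trade $\sum g_n w_n$ for $\sum g_n$ --- but it is strictly more bookkeeping, and it obscures the fact that the $t$-dependence is handled for free by the half-plane theorem. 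That theorem is the clean route and is what your ``main obstacle'' paragraph misses.

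Two further points of comparison. First, your sharpness argument $\sigma_c\ge 1/2$ (singularity of $\log L$ at critical-line zeros, together with analyticity of a Dirichlet series strictly right of its abscissa) is sound and is something the paper's proof leaves implicit; this is a genuine addition. Second, you are right that Conjecture~\ref{ConjDir} as stated only bounds the cosine sum and that the sine sum must also be controlled; the paper handles this in the same spirit (``analogous arguments apply to the imaginary part''), so both treatments carry the same mild implicit assumption, and your observation that a rotated character covers it is a reasonable way to make it explicit.
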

\begin{proof}
Analogously to \eqref{C_dir}, let us define
\begin{equation}
\label{CxDef}
C(x) = \sum_{p \le x} \cos \theta_p.
\end{equation}
It is sufficient to consider the real part of $X(s, \chi)$ in \eqref{PDir}
with $t=0$, i.e.
\begin{equation}
\label{SDef}
S(\sigma, \chi) = \sum_{n=1}^\infty \dfrac{\cos \theta_{p_n}}{p_n^{\, \sigma}}.
\end{equation}
Notice that $\cos \theta_{p_n} = C(p_n) - C(p_{n-1})$, hence
\begin{equation}
S(\sigma, \chi) = \sum_{n=1}^{\infty} C(p_n) \left( \dfrac{1}{p_n^{\,\sigma}} - 
\dfrac{1}{p_{n+1}^{\,\sigma}}\right) = \sigma \sum_{n=1}^{\infty} 
C(p_n) \int_{p_n}^{p_{n+1}} \dfrac{1}{u^{\sigma+1}} du.
\end{equation}
Since $C(u) = C(p_n)$ is a constant for $u \in (p_n, p_{n+1})$, we can write
\begin{equation}
S(\sigma, \chi) = \sigma \int_{2}^{\infty} \dfrac{C(u)}{u^{\sigma+1}} du.
\end{equation}
With $C(u)$ obeying Conjecture~\ref{ConjDir} the above integral is finite,
provided $\sigma > \sigma_c = 1/2$, implying that \eqref{SDef} has
abscissa of convergence $\sigma_c = 1/2$.
For instance, assuming
$C(x) = O\left(\sqrt{x} \log^a x \right)$ yields
\begin{equation}
|S(\sigma, \chi)| \le  K \int_{1}^\infty 
\dfrac{\log^a u}{u^{\sigma+1/2}} du = 
K \dfrac{\Gamma(a+1)}{(\sigma - 1/2)^{a+1}}
\end{equation}
for some constant $K > 0$, and with $\sigma > \sigma_c = 1/2$. The above
integral diverges if $\sigma \le \sigma_c$.

Since convergence of Dirichlet series are always half-planes, the above
result implies that the real part of $X(s, \chi)$ converges
for any complex $s = \sigma + it$ with $\sigma > 1/2$. Analogous argument
applies to the imaginary part of $X(s, \chi)$, and the proof is complete.
\end{proof}

Compelling numerical evidence for Theorem~\ref{DirConvergence}  has
already
been given in \cite{EulerProduct}.

\begin{remark}
The goal of this work was to obtain a result that is unconditional
on the Riemann Hypothesis.
On the other hand,  if one assumes the generalized Riemann hypothesis,
then  one can indeed show that
$C(x) = O(\sqrt{x} \log^2 x )$ \cite{Davenport},
and thereby conclude the Euler product converges by Theorem
\ref{DirConvergence}.
\end{remark}

\begin{corollary}\label{DirRH}
If Conjecture~\ref{ConjDir} is true unconditionally,
then Theorem~\ref{DirConvergence} is also true unconditionally, implying that
all non-trivial zeros of a non-principal Dirichlet $L$-function must
be on the critical line $\sigma = 1/2$, which is the (generalized)
Riemann Hypothesis.
\end{corollary}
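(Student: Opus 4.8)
The plan is to derive the nonvanishing of $L(s,\chi)$ in the open half-plane $\sigma>\tfrac12$ directly from the convergence of its Euler product there, and then to propagate that nonvanishing across the critical line via the functional equation; the sole unproven input is Conjecture~\ref{ConjDir}, everything downstream of it being classical complex analysis. First I would observe that, under the standing hypothesis that Conjecture~\ref{ConjDir} holds for every non-principal character modulo $k$, Theorem~\ref{DirConvergence} — which apart from that conjecture uses only the unconditional Proposition~\ref{Gn} — is unconditional: the Dirichlet series $X(s,\chi)$ of \eqref{PDir} has abscissa of convergence $\sigma_c=\tfrac12$. Since a Dirichlet series is holomorphic throughout its half-plane of convergence, $X(\cdot,\chi)$ is holomorphic on $\{\sigma>\tfrac12\}$; the tail $R(\cdot,\chi)$ of \eqref{PDir} is holomorphic there as well, being absolutely convergent. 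Hence $G(s):=\exp\!\bigl(X(s,\chi)+R(s,\chi)\bigr)$ is holomorphic and zero-free on $\{\sigma>\tfrac12\}$.

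Next I would identify $G$ with $L$. For $\sigma>1$ the Euler product \eqref{EPFDir} together with \eqref{logsum}--\eqref{logsum2} gives $\log L(s,\chi)=X(s,\chi)+R(s,\chi)$, so $G(s)=L(s,\chi)$ on $\{\sigma>1\}$. As recalled earlier in the paper, for non-principal $\chi$ the function $L(\cdot,\chi)$ is holomorphic on the connected set $\{\sigma>0\}\supset\{\sigma>\tfrac12\}$; since $G$ and $L(\cdot,\chi)$ are both holomorphic on $\{\sigma>\tfrac12\}$ and agree on the subdomain $\{\sigma>1\}$, the identity theorem forces $L(s,\chi)=G(s)$ on all of $\{\sigma>\tfrac12\}$, and therefore $L(s,\chi)\neq0$ for $\Re(s)>\tfrac12$. (No branch of the logarithm is taken on $\{\sigma>\tfrac12\}$ — one merely exponentiates the convergent series $X+R$ — so there is no circularity.) Running the same argument for the conjugate character $\overline{\chi}$, itself non-principal modulo $k$ and hence also covered by Conjecture~\ref{ConjDir}, gives $L(s,\overline{\chi})\neq0$ for $\Re(s)>\tfrac12$.

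Finally I would cross the critical line with the functional equation. Reduce first to a primitive character: if $\chi$ modulo $k$ is induced by the primitive $\chi^{\ast}$ modulo $k^{\ast}\mid k$, then $L(s,\chi)=L(s,\chi^{\ast})\prod_{p\mid k}\bigl(1-\chi^{\ast}(p)\,p^{-s}\bigr)$, the finite product being entire and zero-free for $\sigma>0$; hence in the open strip $0<\sigma<1$, which is where the non-trivial zeros lie, the zeros of $L(s,\chi)$ coincide with those of $L(s,\chi^{\ast})$, and the nonvanishing just established applies to both $L(\cdot,\chi^{\ast})$ and $L(\cdot,\overline{\chi^{\ast}})$ on $\{\sigma>\tfrac12\}$. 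For primitive $\chi^{\ast}$ the completed function $\Lambda(s,\chi^{\ast})=\gamma(s)\,L(s,\chi^{\ast})$ is entire and satisfies $\Lambda(s,\chi^{\ast})=W(\chi^{\ast})\,\Lambda(1-s,\overline{\chi^{\ast}})$ with $|W(\chi^{\ast})|=1$, where the archimedean factor $\gamma(s)$ — a power of $k^{\ast}/\pi$ times $\Gamma(\tfrac{s+a}{2})$, $a\in\{0,1\}$ being the parity of $\chi^{\ast}$ — is finite and nonzero for $\sigma>0$; consequently, for $s$ in the open strip, $L(s,\chi^{\ast})=0$ if and only if $L(1-s,\overline{\chi^{\ast}})=0$. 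Now let $\rho=\beta+i\gamma$ be a non-trivial zero of $L(s,\chi)$, equivalently of $L(s,\chi^{\ast})$, so $0<\beta<1$. If $\beta>\tfrac12$ this contradicts the nonvanishing of $L(\cdot,\chi^{\ast})$ on $\{\sigma>\tfrac12\}$; if $\beta<\tfrac12$, then $1-\rho$, with $\Re(1-\rho)=1-\beta>\tfrac12$, is a zero of $L(\cdot,\overline{\chi^{\ast}})$, contradicting the nonvanishing of that function on $\{\sigma>\tfrac12\}$. Therefore $\beta=\tfrac12$ for every non-trivial zero, which is the generalized Riemann Hypothesis for non-principal Dirichlet $L$-functions.

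I expect the real obstacle to lie entirely upstream, in Conjecture~\ref{ConjDir} itself: the corollary is conditional, and the $\sqrt N$ random-walk growth it posits is exactly what the paper cannot prove. Granting it, the only steps needing any care are (i) the passage from the merely \emph{conditional} convergence of the Euler product to the nonvanishing of $L$ — which is clean once one records that a Dirichlet series is holomorphic on its half-plane of convergence, so that $\exp(X+R)$ is the analytic continuation of $L(\cdot,\chi)$ past $\sigma=1$ — and (ii) the bookkeeping in the functional-equation step, chiefly the reduction to a primitive character (after which the trivial zeros and, for imprimitive $\chi$, the extra Euler-factor zeros all sit at $\Re s\le0$, harmlessly outside the open strip). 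Neither (i) nor (ii) is a genuine difficulty.
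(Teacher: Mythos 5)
Your proof is correct and follows the same overall route as the paper — nonvanishing of $L(s,\chi)$ on $\{\sigma>\tfrac12\}$ from the (conditional) convergence of the Euler product there, then the functional equation to preclude zeros with $\sigma<\tfrac12$ — but you fill in three points that the paper's two-line argument leaves implicit. (1)~The paper's proof says ``a zero $\rho$ of $L(\rho,\chi)=0$ requires $X(\rho,\chi)\to-\infty$,'' which presupposes that $\log L(s,\chi)=X(s,\chi)+R(s,\chi)$ continues to hold for $\tfrac12<\sigma\le1$; this is not automatic, since the logarithm of the Euler product was only taken for $\sigma>1$. You close this gap cleanly: a Dirichlet series is holomorphic on its half-plane of convergence, so $\exp(X+R)$ is holomorphic and zero-free on $\{\sigma>\tfrac12\}$, it agrees with $L(\cdot,\chi)$ on $\{\sigma>1\}$, and the identity theorem forces the equality on the larger half-plane. (2)~The functional equation pairs $L(s,\chi)$ with $L(1-s,\overline{\chi})$, so one must also know that $L(\cdot,\overline{\chi})$ does not vanish for $\sigma>\tfrac12$; you explicitly note that $\overline{\chi}$ is likewise a non-principal character mod~$k$, hence covered by Conjecture~\ref{ConjDir}, whereas the paper does not address this. (3)~The clean functional equation is a statement about \emph{primitive} characters; you carry out the standard reduction via $L(s,\chi)=L(s,\chi^{*})\prod_{p\mid k}\bigl(1-\chi^{*}(p)p^{-s}\bigr)$ and observe that the extra Euler factor is zero-free in the open critical strip. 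None of this changes the logical skeleton, but it does turn the paper's informal sketch into a complete argument, so that the only genuine hypothesis remaining is indeed Conjecture~\ref{ConjDir}.
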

\begin{proof} The argument is very simple, analogous to
showing that there are no zeros with $\sigma > 1$.
From \eqref{logsum2},
a zero $\rho$ of $L(\rho, \chi) = 0$ requires $X(\rho, \chi) \to -\infty$.
If $X(s,\chi)$ converges for $\sigma > 1/2$ there are no zeros of
$L(s,\chi)$ in this region. From the functional equation, which is
a symmetry between $L(s,\chi)$ and
$\overline{L(1-\bar{s}, \chi )}$, it implies no non-trivial
zeros with $\sigma < 1/2$. Since it is known that there are infinite zeros
in the critical strip $0 < \sigma < 1$, they must all be on
the line $\sigma = 1/2$.
\end{proof}

\section{Convergence of the Euler Product for $L$-functions
based on Cusp Forms}
\label{sec:modular}

In this section  we show that the same reasoning applies to $L$-functions based
on cusp forms.
Let $f(z)$ denote such a modular form  of weight $k$.
The $\SL$ transformations  imply the periodicity $f(z +1) = f(z )$,
thus it has a Fourier series
\beq \label{Fourier}
f(z ) = \sum_{n=0}^\infty  c(n) \, q^n  ,
\qquad q \equiv  e^{2 \pi i z}.
\eeq
If $c(0) = 0$ then $f$ is called a cusp form.
We will only consider  level-$1$,   entire modular forms.
From the Fourier coefficients \eqref{Fourier}
one can define the Dirichlet series
\beq \label{Lmod}
L\(s, f\) = \sum_{n=1}^\infty \dfrac{c\(n\)}{n^s}.
\eeq
For cusp forms  $L(s,f)$ is entire, i.e. has no poles.
It is also known that
\beq \label{growthmod}
c(n) = O(n^k),
\eeq
The validity of a Riemann Hypothesis for $L(s, f)$ remains
an open question. However, it was conjectured to be true by Ramanujan for
the $L$-function based on $c(n) = \tau (n)$.

The Fourier coefficients now  have the following multiplicative property:
$c(m) c(n)  = \hspace{-.4em}  
\sum_{d \,| (m,n)} d^{\,k-1} c\(\dfrac{mn}{d^2} \)$
where $d\, |(m,n)$ are the divisors of $(m,n)$.
This multiplicative property changes the form of the Euler product slightly in
comparison to the completely multiplicative case of Dirichlet $\chi$. We now
have
\beq \label{EPFmod}
L(s, f)  =  \prod_{n=1}^\infty \(  1 -  \dfrac{c(p_n)}{p_n^{\,s}}  
+ \dfrac{1}{p_n^{\,2s-k+1}}  \)^{-1}
\eeq
which  converges absolutely  for $\sigma > k/2+1$.

The analysis of Section~\ref{sec:convergence} extends
straightforwardly if one uses a non-trivial result of Deligne.
For the arithmetic function $\tau (n)$,
Ramanujan conjectured that it actually grows more slowly than
\eqref{growthmod},  namely  $\tau (p) = O(p^{11/2})$.
This was only proved in 1974 by Deligne \cite{Deligne} as a consequence of
his proof of the Weil conjectures.

\begin{theorem}[Deligne] \label{DeligneTheo}
For cusp forms we have
\beq \label{Deligne}
|c(p)| \leq  2  p^{(k-1)/2}.
\eeq
\end{theorem}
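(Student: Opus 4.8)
The plan is to reduce \eqref{Deligne} to the Riemann Hypothesis over finite fields. Because we work with a normalized Hecke eigenform (the setting of \eqref{mult_prime}), the recursion \eqref{mult_prime} shows that the local factor at $p$ appearing in \eqref{EPFmod} factors over $\mathbb{C}$ as
\beq
1 - \frac{c(p)}{p^s} + \frac{1}{p^{2s-k+1}} = \(1 - \frac{\alpha_p}{p^s}\)\(1 - \frac{\beta_p}{p^s}\),
\eeq
where $\alpha_p$ and $\beta_p$ are the roots of $t^2 - c(p)\,t + p^{k-1} = 0$, so that $\alpha_p + \beta_p = c(p)$ and $\alpha_p \beta_p = p^{k-1}$. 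Hence \eqref{Deligne} is \emph{equivalent} to $|\alpha_p| = |\beta_p| = p^{(k-1)/2}$ (equivalently, to the discriminant $c(p)^2 - 4 p^{k-1}$ being nonpositive): granting this, the triangle inequality gives $|c(p)| = |\alpha_p + \beta_p| \le 2 p^{(k-1)/2}$ at once. So the entire problem is to control the archimedean absolute value of the two Satake parameters $\alpha_p, \beta_p$.

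As a guide I would first recall the weight-two case: by Eichler--Shimura one attaches to $f$ an abelian variety $A/\mathbb{Q}$ (an isogeny factor of the Jacobian of a modular curve $X_0(N)$) with $L(s,A) = L(s,f)$, so that $c(p) = p + 1 - \#A(\mathbb{F}_p)$, and the bound is then exactly Hasse's theorem --- the Weil conjecture for curves --- applied to the reduction of $A$ mod $p$. The task is to emulate this construction for every even $k \ge 4$. The plan is to realize $c(p)$ cohomologically: take the universal (generalized) elliptic curve $\mathcal{E} \to Y$ over a modular curve of suitable level, form the $(k-2)$-fold fibre power $W = \mathcal{E} \times_Y \cdots \times_Y \mathcal{E}$ --- the Kuga--Sato variety, of dimension $k-1$ --- pass to a smooth compactification $\overline{W}$, and consider the $\ell$-adic étale cohomology $H^{k-1}_{\mathrm{et}}(\overline{W}_{\overline{\mathbb{Q}}}, \mathbb{Q}_\ell)$. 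The Hecke operators act on this space, and the $f$-isotypic piece $V_f$ is two-dimensional; using the Eichler--Shimura congruence relation between $T_p$ and Frobenius one verifies that $\mathrm{tr}\,\mathrm{Frob}_p = c(p)$ and $\det\,\mathrm{Frob}_p = p^{k-1}$ on $V_f$, so that $\alpha_p$ and $\beta_p$ are precisely the eigenvalues of $\mathrm{Frob}_p$ acting on $V_f$.

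The decisive step --- and essentially the whole difficulty --- is to show that $V_f$ is \emph{pure of weight} $k-1$: under every embedding $\overline{\mathbb{Q}}_\ell \hookrightarrow \mathbb{C}$, every eigenvalue of $\mathrm{Frob}_p$ has absolute value $p^{(k-1)/2}$. For a smooth projective variety over $\mathbb{F}_p$ this is Deligne's theorem (\emph{La conjecture de Weil~I}), the Riemann Hypothesis component of the Weil conjectures, and there is no elementary substitute for it. A technical point that must be handled with care is that the open Kuga--Sato variety is not proper, so one has to choose the compactification $\overline{W}$ judiciously (a suitable Kuga--Sato/toroidal compactification with well-understood boundary, or a resolution thereof) and control the cohomology of the boundary divisor, or else work with interior cohomology together with the mixed form of purity. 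With Deligne's purity theorem in hand, $|\alpha_p| = |\beta_p| = p^{(k-1)/2}$ follows, and \eqref{Deligne} with it. I note that the soft analytic route --- the Rankin--Selberg convolution --- yields only the non-sharp bound $|c(p)| = O(p^{(k-1)/2 + 1/5})$, which is why the algebro-geometric input is genuinely necessary. Since reconstructing this machinery lies well outside the scope of the present paper, in what follows we simply invoke \cite{Deligne}.
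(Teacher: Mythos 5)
The paper gives no proof of this statement; it is stated as a named theorem and simply cited to Deligne's \emph{La conjecture de Weil~I}, exactly as you do at the end of your sketch. Your outline of Deligne's argument is accurate and matches the standard account: factor the local Hecke polynomial to pass to the Satake parameters $\alpha_p,\beta_p$ with $\alpha_p\beta_p=p^{k-1}$; realize these as Frobenius eigenvalues on the two-dimensional $f$-isotypic piece of $H^{k-1}_{\mathrm{et}}$ of a suitably compactified Kuga--Sato variety via the Eichler--Shimura congruence relation; and then invoke purity from the Weil conjectures to force $|\alpha_p|=|\beta_p|=p^{(k-1)/2}$, with the equivalence to $|c(p)|\le 2p^{(k-1)/2}$ holding because $c(p)\in\mathbb{R}$ in the level-one setting of this paper. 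One small numerical correction to your aside: the classical Rankin--Selberg bound is $c(n)=O\bigl(n^{k/2-1/5}\bigr)$, i.e.\ exponent $(k-1)/2+3/10$, not $(k-1)/2+1/5$; this does not affect the point you are making, namely that the soft analytic route falls short of the sharp Ramanujan--Petersson exponent.
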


This theorem implies that \eqref{Lmod} converges
absolutely for $\sigma >  (k+1)/2 $.
Based on the  known functional equation,
the critical line is $\sigma = k/2$,
and the critical strip is the region
$(k-1)/2 \leq \sigma \leq (k+1)/2$.

\begin{theorem} \label{CuspConvergence}
Let $L(s,f)$ be an $L$-function based on a cusp
form $f$, with Euler product given by \eqref{EPFmod}.
Assuming Conjecture~\ref{ConjCusp}, the Euler product
converges to the right of the critical line $\sigma > k/2$.
\end{theorem}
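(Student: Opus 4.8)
The plan is to follow the proof of Theorem~\ref{DirConvergence} almost line for line, the single new ingredient relative to that argument being Deligne's bound (Theorem~\ref{DeligneTheo}): it lets one pull the Ramanujan weight $p^{(k-1)/2}$ out of the coefficients $c(p)$ and thereby reduce the problem to a copy of the Dirichlet argument with the critical line shifted from $\sigma=\tfrac12$ to $\sigma=k/2$. First I would take the formal logarithm of the Euler product \eqref{EPFmod} (principal branch, as in \eqref{logsum}), factoring each local quadratic as $1-c(p_n)p_n^{-s}+p_n^{k-1-2s}=(1-\alpha_{p_n}p_n^{-s})(1-\beta_{p_n}p_n^{-s})$ with $\alpha_{p_n}\beta_{p_n}=p_n^{k-1}$, $\alpha_{p_n}+\beta_{p_n}=c(p_n)$, so that Theorem~\ref{DeligneTheo} gives $|\alpha_{p_n}|=|\beta_{p_n}|=p_n^{(k-1)/2}$. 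Expanding the logarithms in power series yields $\log L(s,f)=\sum_{n\geq1}\sum_{m\geq1}(\alpha_{p_n}^{\,m}+\beta_{p_n}^{\,m})/(m\,p_n^{\,ms})$. The contribution of the terms with $m\geq2$ is bounded in absolute value by $\sum_{n}\sum_{m\geq2}2/(m\,p_n^{\,m(\sigma-(k-1)/2)})$, which converges absolutely precisely when $\sigma>k/2$; hence, exactly as in \eqref{logsum}--\eqref{logsum2}, $\log L(s,f)=X(s,f)+O(1)$ for $\sigma>k/2$, where $X(s,f)=\sum_{n}c(p_n)/p_n^{\,s}$ is the cusp-form analog of the series in \eqref{PDir}.

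It then remains only to show that $X(s,f)$ converges for $\sigma>k/2$. Using Deligne once more I would write $c(p_n)=2p_n^{(k-1)/2}\cos\alpha_{p_n}$ with $\cos\alpha_{p_n}\in[-1,1]$, as in \eqref{Frobenius}, so that $X(s,f)=2\sum_{n}\cos\alpha_{p_n}/p_n^{\,w}$ with $w=s-(k-1)/2$; in the variable $w$ this generalized Dirichlet series has precisely the form treated in Theorem~\ref{DirConvergence}, the unimodular $\chi(p_n)$ being replaced by the bounded real amplitudes $2\cos\alpha_{p_n}$. Following \eqref{anbn}--\eqref{AbsSN}, I would examine the real part $\CS_N$ of its partial sum with $a_n=2p_n^{-\Re w}$ (monotone decreasing, tending to $0$ for $\Re w>0$), $b_n=\cos\alpha_{p_n}\cos(t\log p_n)$, $B_n=\sum_{j\leq n}b_j$, and apply summation by parts as in \eqref{sumparts}. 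At $t=0$ one has $B_N=\sum_{n\leq N}\cos\alpha_{p_n}$, the Frobenius-angle form of the random walk \eqref{tau2}: granting the $\sqrt N$ growth, $B_N=O(\sqrt N)$, whence the boundary term $a_NB_N$ vanishes for $\Re w>\tfrac12$. For the remaining terms in \eqref{AbsSN} I would reuse the estimates from the proof of Theorem~\ref{DirConvergence} verbatim: the $g_n^2/p_n^{\Re w+2}$ term converges by the Baker--Harman--Pintz bound $g_n<p_n^{0.525}$ \cite{Pintz}, and a further summation by parts on the leading term, together with Proposition~\ref{Gn}, gives $\sum_{n}g_n/p_n^{\Re w+1/2}<\sum_{n}\log^2 p_n/p_n^{\Re w+1/2}<\sum_{n}\log^2 n/n^{\Re w+1/2}<\infty$ for $\Re w>\tfrac12$. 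The case $t\neq0$ and the imaginary part of $X(s,f)$ are handled identically. Since convergence domains of generalized Dirichlet series are half-planes, $X(s,f)$---hence $\log L(s,f)$, hence the product \eqref{EPFmod}---converges for $\sigma>k/2$.

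The main obstacle is, as in the Dirichlet case, the unproven $\sqrt N$ random-walk bound, and there is one extra subtlety worth flagging here: the partial sum that actually enters is $\sum_{n\leq N}\cos\alpha_{p_n}$, a random walk over the Frobenius angles, which is a mild strengthening of the sign-only statement of Conjecture~\ref{ConjCusp}. It is, however, its natural analog, and would follow from the mean-zero equidistribution of the $\alpha_p$ predicted by the Sato--Tate conjecture \cite{SatoTate} together with the multiplicative-independence heuristic of Section~\ref{characters}; one may also, exactly as in Remark~\ref{weakconv}, replace Conjecture~\ref{ConjCusp} by the central-limit model of Section~\ref{prob_model} and conclude weak (in-probability) convergence of \eqref{EPFmod} for $\sigma>k/2$ unconditionally. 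Everything else---the absolute convergence of the $m\geq2$ tail, the prime-gap sums, and the half-plane property---is unconditional and routine given Theorem~\ref{DeligneTheo}, Proposition~\ref{Gn}, and the Baker--Harman--Pintz bound, so I expect this random-walk estimate to be the sole genuine difficulty.
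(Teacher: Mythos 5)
Your proof follows the same skeleton as the paper's: take the logarithm of \eqref{EPFmod}, isolate the leading series $X(s,f)=\sum_n c(p_n)/p_n^{\,s}$ from the absolutely convergent tail, then control $X$ by summation by parts, the random-walk conjecture, and Proposition~\ref{Gn}. The difference is in how you organize the bound on the partial sums of $c(p_n)$, and here your version is in fact cleaner than the paper's. The paper works with $B_N=\sum_{n\le N}|c(p_n)|\cos\theta_{p_n}$ and claims that ``applying summation by parts once more, together with \eqref{Deligne} and Conjecture~\ref{ConjCusp}, we conclude that $B_N=O(p_N^{\,k/2})$.'' That step is problematic as stated: if one treats $|c(p_n)|$ as the Abel weight, the derived term involves $\sum_n|V_n|\,\big||c(p_n)|-|c(p_{n+1})|\big|$ with $V_n=\sum_{m\le n}\cos\theta_{p_m}$, and since $|c(p_n)|$ is not monotone (nor of controlled total variation), Deligne's pointwise bound plus the sign-only Conjecture~\ref{ConjCusp} does not force $B_N=O(p_N^{\,k/2})$ --- one can build a sequence consistent with both hypotheses for which $B_N$ is of order $Np_N^{(k-1)/2}$. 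Your reformulation $c(p_n)=2p_n^{(k-1)/2}\cos\alpha_{p_n}$ fixes exactly this: the weight $2p_n^{(k-1)/2}$ is monotone increasing, so Abel summation cleanly yields $B_N=O(p_N^{\,k/2})$ provided $\sum_{n\le N}\cos\alpha_{p_n}=O(\sqrt N)$. You correctly flag that this is a mild strengthening of Conjecture~\ref{ConjCusp}, whose statement and the subsequent Remark are explicitly about the signs $\cos\theta_p\in\{\pm1\}$ only. In other words, the gap you noticed is not in your write-up but in the paper's: the paper implicitly needs the Frobenius-angle form of the random-walk bound (which Sato--Tate plus the independence heuristic would give, as you note), not merely the sign version. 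Everything else in your argument --- the factorization into Satake parameters, the absolute convergence of the $m\ge2$ tail for $\sigma>k/2$, the shift $w=s-(k-1)/2$ reducing to the Dirichlet geometry, the Baker--Harman--Pintz and $\log^2$ estimates, and the half-plane property --- matches the paper and is sound.
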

\begin{proof}
The arguments are nearly the same as in Theorem~\ref{DirConvergence}.
Taking the logarithm of \eqref{EPFmod} one has $\log L(s, f) = X(s,f) + O(1)$,
where $O(1)$ denotes absolutely convergent terms  for $\sigma > k/2$ which
is a consequence of \eqref{Deligne}. Here we have
\beq\label{XMod}
X(s,f) = \sum_{n=1}^{\infty} \dfrac{c(p_n)}{p_n^{\, s}}.
\eeq
Therefore, all relies on the region of convergence of \eqref{XMod}. Without
loss of generality let us set $t=0$. Define
\beq\label{Btilde}
B(x) = \sum_{p \le x} |c(p)| \cos \theta_{p}.
\eeq
We thus have
\begin{equation} \label{CuspInt}
X(\sigma, f) = \sigma \int_{2}^{\infty} \dfrac{B(u)}{u^{\sigma+1}} du
\end{equation}
where we used $c(p_n) = |c(p_n)| \cos \theta_{p_n} = B(p_n) - B(p_{n-1})$,
and the fact that $B(u) = B(p_n)$ is a constant on the interval
$u \in (p_n, p_{n+1})$.

Using summation by parts on \eqref{Btilde}, together with
Deligne's bound \eqref{Deligne}, we conclude that
\begin{equation}
|B(x)| \le K  x^{k/2 - 1/2 } |C(x)| 
\end{equation}
for some constant $K > 0$, and
where
$C(x) = \sum_{p\le x} \cos\theta_p$ as in \eqref{tau2}.
Assuming Conjecture~\ref{ConjCusp} implies that \eqref{CuspInt} is
finite as long as $\sigma > k/2$. In particular, let
$C(x) = O\left( \sqrt{x} \log^a x\right)$ for any $a > 0$. Computing the
integral in \eqref{CuspInt} gives
$|X(\sigma, \chi)| \sim \Gamma(a+1) (\sigma - k/2)^{-a-1}$ if $\sigma >
k/2$, which is finite, and the integral diverges otherwise.
Since convergence of \eqref{XMod} must be a half-plane,
it converges for any complex $s = \sigma + it$ with
$\sigma > k/2$, hence the Euler product \eqref{EPFmod} also converges in
this region, as claimed.
\end{proof}

\begin{corollary} \label{CuspRH}
If Conjecture~\ref{ConjCusp} is true unconditionally then
Theorem~\ref{CuspConvergence} follows, and
the Riemann Hypothesis is true for $L$-functions based on cusp forms.
\end{corollary}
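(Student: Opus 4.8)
The plan is to transplant the short argument of Corollary~\ref{DirRH} to the cusp-form setting, the only new bookkeeping being the Gamma-factor in the functional equation \eqref{funcmod}.

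First I would note that if Conjecture~\ref{ConjCusp} holds with no hypotheses attached, then the proof of Theorem~\ref{CuspConvergence} is unconditional, since Proposition~\ref{Gn} and Theorem~\ref{DeligneTheo} are already unconditional. Hence the Dirichlet series $X(s,f)$ of \eqref{XMod} converges for every $s=\sigma+it$ with $\sigma>k/2$, and by the analog of \eqref{logsum2} we have $\log L(s,f)=X(s,f)+O(1)$ there, the $O(1)$ remainder being the multi-prime sum, which is absolutely convergent for $\sigma>k/2$ by \eqref{Deligne}. So $\log L(s,f)$ takes a finite value on the whole half-plane $\sigma>k/2$; exactly as in the classical argument that $\zeta(s)\ne0$ for $\sigma>1$, at a hypothetical zero $\rho$ one would need $\log L(s,f)\to-\infty$ as $s\to\rho$, which is impossible. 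Therefore $L(s,f)$ has no zeros with $\sigma>k/2$.

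Second I would invoke the functional equation \eqref{funcmod}, written as $\Lambda(s,f)=(2\pi)^{-s}\Gamma(s)L(s,f)=(-1)^{k/2}\Lambda(k-s,f)$. The factor $(2\pi)^{-s}$ is entire and nowhere zero, and $\Gamma(s)$ is nowhere zero with poles only at $s\in\{0,-1,-2,\dots\}$, all lying strictly to the left of the critical strip $(k-1)/2\le\sigma\le(k+1)/2$ because $k\ge4$. Hence inside the strip the zeros of $\Lambda(s,f)$ coincide with those of $L(s,f)$, and $s\leftrightarrow k-s$ is a genuine symmetry of this zero set. A non-trivial zero with $\sigma<k/2$ would then force a zero of $L(k-s,f)$ with real part $k-\sigma>k/2$, contradicting the first step. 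So every non-trivial zero has $\sigma\ge k/2$, and by the reflection $\sigma\le k/2$, i.e.\ $\sigma=k/2$.

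Finally, since the standard Hecke theory (order-one growth of $\Lambda(s,f)$ together with the functional equation, exactly as for $\zeta$) guarantees infinitely many zeros in the critical strip, and all of them satisfy $\sigma=k/2$, the Riemann Hypothesis for $L(s,f)$ follows. The hard part is, of course, not this deduction but Conjecture~\ref{ConjCusp} itself — the $\sqrt{N}$ random-walk growth of the sign sum \eqref{tau2}; within the conditional framework the sole point requiring care is the Gamma-factor bookkeeping just described, which is routine precisely because the critical strip sits well to the right of the poles of $\Gamma$.
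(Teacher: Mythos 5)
Your argument is the same one the paper uses: convergence of $X(s,f)$ for $\sigma>k/2$ together with $\log L(s,f)=X(s,f)+O(1)$ excludes zeros in that half-plane, and the functional equation \eqref{funcmod} then reflects that exclusion to $\sigma<k/2$, leaving only the critical line. The extra care you take with the Gamma factor — checking that $(2\pi)^{-s}\Gamma(s)$ is zero-free with poles at $s\in\{0,-1,-2,\dots\}$, all lying strictly to the left of the strip $(k-1)/2\le\sigma\le(k+1)/2$ since $k\ge4$, so that $\Lambda$ and $L$ share zeros there and the trivial zeros are well separated — is correct and slightly more complete than the paper's terse remark, but it is the same proof.
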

\begin{proof}
This simple argument is the same as in Corollary~\ref{DirRH}.
Convergence of $X(s, f)$ on $\sigma > k/2$ does not allow zeros
in this region. The functional equation which relates $L(s,f)$ to $L(k-s,f)$
forces the zeros to lie on the critical line $\sigma = k/2$, since it
excludes them from the left half of the critical strip $\sigma < k/2$.
\end{proof}

\begin{example}
Consider the Euler product based on the Ramanujan
$\tau$ function which is a weight $k=12$ cusp form:
\beq
\label{CPN}
L(s, \tau) = \lim_{N\to \infty}  P_N (s, \tau), \qquad
P_N(s, \tau) = \prod_{n=1}^N  \(  1 -  \dfrac{\tau(p_n)}{p_n^{\, s}}  
+ \dfrac{1}{p_n^{\,2s-11}}  \)^{-1}.
\eeq
Here $L(s, \tau)$ is the $L$-function, the analytic continuation of
the $L$-series. It is straightforward to numerically verify the above
results as shown in Figure~\ref{fig:ram}.
We expect the results to be more accurate as we increase $N$.
We choose $t=0$ in the rightmost column to explicitly show that there are no
divergences on the real line.

\begin{figure}
\begin{minipage}{.47\textwidth}
\footnotesize
\def\arraystretch{0.65}
\begin{tabular}{@{}c@{}c@{}c@{}}
\toprule[1pt]
$N$ & $\left| P_N(s,\tau) \right|$ & $\left| P_N(s,\tau) \right|$ \\
\midrule[.5pt]
$1\cdot10^1$  & $0.3085$ & $0.7545$  \\
$1\cdot10^2$  & $0.2291$ & $0.7747$  \\
$1\cdot10^3$  & $0.2530$ & $0.7992$  \\
$1\cdot10^4$  & $0.2548$ & $0.8104$  \\
$2\cdot10^4$  & $0.2562$ & $0.8114$  \\
$3\cdot10^4$  & $0.2586$ & $0.8133$  \\
$4\cdot10^4$  & $0.2597$ & $0.8142$  \\
\midrule[0.5pt]
&
\begin{tabular}{@{}l@{}}
$|L(s,\tau)| = 0.2610~~ $ \\
$s=6.25+100\, i$ 
\end{tabular} &
\begin{tabular}{@{}l@{}}
$|L(s,\tau)| = 0.8170\ $ \\
$s=6.25+0\,i$  
\end{tabular} \\ \bottomrule[1pt]
\end{tabular}
\end{minipage}
\begin{minipage}{.52\textwidth}
\includegraphics[width=\textwidth]{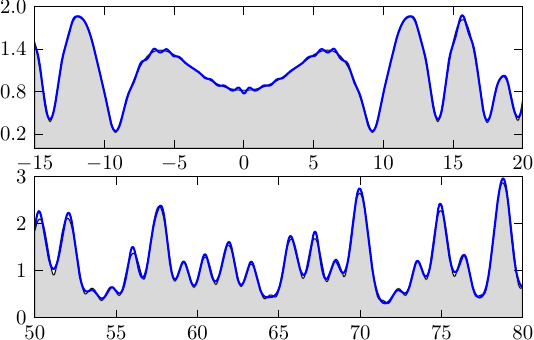}
\end{minipage}
\caption{\label{fig:ram}
Numerical results for the Euler product based on Ramanujan $\tau$.
The table show some values for two different points  inside the critical
strip as we increase
$N$. Note that there are no divergences when $t=0$.
We also have two plots for $s=6.25+i\,t$. In the first case
$t\in[-15,20]$, and in the second case $t\in[50,80]$. We choose
$N=10^2$ since for higher $N$ the curves are indistinguishable.
The (shaded) black line is $|L(s,\tau)|$ and the
blue line $|P_N(s,\tau)|$, against $t$.
}
\end{figure}
\end{example}

\section{An Equation Relating Non-Trivial Zeros and Primes}

Riemann \cite{Riemann} proved that the non-trivial
zeros of $\zeta(s)$ dictate the distribution of the primes:
by summing over the infinite number of
zeros
one can reconstruct the prime number counting function $\pi(x)$ exactly.
Assuming the Euler product converges to the right of the critical
line we can actually establish a  converse.

One can obtain an exact equation relating zeros to primes as follows.
Consider zeros $\rho_n = 1/2 + it_n$ of  non-principal and primitive
Dirichlet $L$-functions.
In \cite{FrancaLeclair} we derived the following transcendental equation
on the critical line without assuming the Riemann Hypothesis:
\beq\label{trans_dir}
\vartheta_{k,a}\(t_n\) +
\lim_{\delta\to0^{+}} \Im \log \(
\dfrac{L\(\tfrac{1}{2}+\delta + 
it_n,\chi\)}{L\(\tfrac{1}{2}+\delta, \chi\)} \)
= \(n-\tfrac{1}{2}\)\pi
\eeq
where $n=1,2,\dotsc$ and $t_n > 0$.
Above $\vartheta_{k,a}(t) = 
\Im\log\Gamma\(\tfrac{1}{4}+\tfrac{a}{2} + i\tfrac{t}{2}\) - 
\tfrac{t}{2} \log \tfrac{\pi}{k}$, where $a = 1$ if $\chi(-1)=-1$
and $a=0$ if $\chi(-1)=1$.
The above equation is actually also valid for
$\zeta(s)$ and other principal  $L$-functions.
We argued in \cite{FrancaLeclair}  that if the above equation has
a solution for every $n$, then it saturates the known counting formula on
the entire strip, implying that all zeros must be on
the critical line.  There is a unique solution for every $n$ if one
ignores the $\arg L$ term,  which can be expressed in terms of
the Lambert-$W$ function to a very good approximation.
However, we were not able to justify that this equation does have
a solution for every $n$ with the $\arg L$ term.
The issue is  whether the $\delta$ limit
in \eqref{trans_dir} is well-defined.  Assuming Theorem~\ref{DirConvergence},
it should be well-defined  since the Euler product converges to the
right of the critical line. We therefore have
\beq\label{arg_dir}
\lim_{\delta \to 0^+}  \arg L\(\tfrac{1}{2}+\delta + it, \chi\)
= - \lim_{\delta \to 0^{+}} \Im  \sum_p
\log\( 1-\dfrac{\chi(p)}{p^{1/2+\delta+it}} \).
\eeq
This series converges with $\delta > 0$ thus the above limit
is well-defined.
Note that it is well-defined even on a zero $t=t_n$, since
we get arbitrarily close but we never actually touch the critical line.
Moreover, this shows that $ \lim_{\delta \to 0^+}  \arg L\(\tfrac{1}{2}+\delta
+ it, \chi\)  = O(1)$ as long as  one stays to the right
of the critical line. This justifies that \eqref{trans_dir} has a solution
for every $n$,   however does not rigorously prove so.

More interestingly, equation \eqref{trans_dir} together
with \eqref{arg_dir} no longer makes any reference to the $L$-function itself.
Remarkably,   this   shows that every single
zero $\rho_n = 1/2 + it_n$ is determined by all of the primes,  which is
a converse of Riemann's result for $\pi(x)$ as a sum over zeros.
One can actually in practice  solve for zeros using only primes from
equation \eqref{trans_dir}
with the replacement \eqref{arg_dir}.
In Table~\ref{table_zeros} we provide some numerical data for
the character in \eqref{chi7}.
Approximating $\vartheta_{k,a} (t)$ with Stirling's formula,
one can calculate zeros to very high $t$ using the above formulas.
Using this approach,  in \cite{ALzeta}
one of us computed the $10^{100}$-th zeta zero to over $100$ decimal places.
An interesting question which we leave open is how the error scales with $N$.

\begin{table}
\centering
\def\arraystretch{0.7}
\begin{tabular}{@{}lll@{}}
\toprule[1pt]
$N$ & $t_1$ & error (\%) \\
\midrule[.5pt]
$1$   & $5.57869$ & $7.3$ \\
$10$   & $5.24273$ & $0.85$ \\
$10^2$ & $5.20071$ & $0.05$ \\
$10^3$ & $5.19936$ & $0.02$ \\
$10^4$ & $5.19596$ & $0.04$ \\
$10^5$ & $5.19946$ & $0.02$ \\
$10^6$ & $5.19947$ & $0.02$ \\
\bottomrule[1pt]
\end{tabular}
\caption{\label{table_zeros}
The first zero on the upper critical line for
the Dirichlet character $\chi$ modulo $7$
in \eqref{chi7}   calculated only from knowledge of
the first $N$ primes  from equations \eqref{trans_dir}
and \eqref{arg_dir}.   The actual value is $t_1 = 5.198116\dotsm$.
One can see how the accuracy increases with $N$, but rather slowly.}
\end{table}

The same argument holds for cusp forms through \eqref{EPFmod};
see \cite[eq. (57)]{FrancaLeclair}. Although
not discussed in \cite{FrancaLeclair}, it is now clear that this
only applies to cusp forms.
For non-cusp forms the transcendental equation will not have a solution.

\section{The case of Riemann  $\zeta$ and principal Dirichlet}
\label{sec:riemann}

In this last section we briefly remark on the case of
principal Dirichlet characters. The Riemann $\zeta$-function corresponds to the
principal character with modulus $k=1$.
Much of the same reasoning of the previous cases apply,  however, with
some subtle new issues that complicate the analysis.
In these cases $L(s,\chi)$ has a simple pole at $s=1$,
therefore the abscissas  of  convergence of the Euler product are
$\sigma_a = \sigma_c = 1$. Thus, the Euler
product formally diverges for $\sigma \le 1$.
Nevertheless, let us revisit the arguments of
Theorem~\ref{DirConvergence}. For principal characters we have
$\theta_{p_n}=0$, thus we should consider $X(s, \chi)$ in
\eqref{PDir} with \eqref{CxDef}
replaced by
\beq \label{BN2}
C_N (t)  =  \sum_{n=1}^N   \cos (t \log p_n ) .
\eeq
Setting  $t = 0$ gives $C_N = N$, and the same arguments imply
convergence of the Euler product for $\sigma > 1$ which is
consistent with what was just stated above.

However, through further analysis of \eqref{BN2} it can be
shown \cite{Kac} that if
we draw $t$ at random from the interval $[T,2T]$ then \eqref{BN2}
obeys a central limit theorem when
$T \to \infty$ and $N\to\infty$, implying $C_N(t) = O(\sqrt{N})$ in
distribution,  again up to logarithms. The law of iterated logarithms
suggests $C_N (t) = O(\sqrt{N \log \log N})$.
In the case
of \eqref{BN2} the central limit  theorem again relies  on the multiplicative
independence of the primes. Furthermore, it is possible to estimate
\eqref{BN2} directly through the prime number theorem \cite{EulerProduct}
\beq
C_N(t) = \int_{2}^{p_{N}} \cos\(t \log x \) \dfrac{d \pi (x)}{dx} \, dx
\sim \int_{2}^{p_{N}} \cos\(t \log x \) \dfrac{dx}{\log x}.
\eeq
The last integral is expressed  in terms of the $\Ei(z)$ function and
asymptotically yields
\beq \label{BnSmooth}
C_N(t) \sim \dfrac{p_{N}}{\log p_{N}} \dfrac{t}{1+t^2} \sin\( t\log p_N \).
\eeq
The growth of $C_N(t)$ is then given roughly by the ratio $N/t$.
This shows that we can still have
$C_N(t) = O(\sqrt{N})$ for $N \le N_c$ with $N_c = O(t^2)$.
As we will explain,  the need for the cutoff $N_c$ is ultimately
attributed to the existence of the pole.

Therefore, for a fixed $t \gg 0$,
the analysis in Theorem~\ref{DirConvergence} is still valid as long
as we stay below the cutoff $N_c$, i.e. we cannot take the limit $N\to\infty$.
This means that in the region $1/2 < \sigma \le 1$ a truncated
product $\prod_{p \le p_N} \(1 - \chi(p) p^{-s}\)^{-1}$, with
$N\le N_c$, is well-behaved and meaningful despite the fact  that the Euler
product itself is formally divergent.
More specifically, the equality \eqref{logsum2} should now read
\beq
\log L(s,\chi) = X_{N}(s, \chi) + O(1) + R_N(s,\chi)
\eeq
where again $O(1)$ denotes higher order terms which are absolutely convergent
for $\sigma > 1/2$, and $R_N(s,\chi)$ is an error due to truncating
$X(s,\chi)$.
To claim something more precise it is necessary to compute $R_N(s,\chi)$
without further assumptions, which is beyond the scope of this paper.
Yet, if one can show that $R_N(s,\chi)$ becomes small for large $N$ and $t$,
then it may be possible to exclude zeros in the region $\sigma > 1/2$ from
this argument.

Based on a result of Titchmarsh \cite{Titchmarsh} which extends the partial
sum of $\partial_s \log \zeta(s)$ into the critical strip, at the cost of
introducing a sum over non-trivial zeros of $\zeta(s)$,
Gonek, Keating, and Hughes \cite{Gonek,Gonek1} proposed a truncated
Euler product into the critical strip. Assuming the Riemann Hypothesis,
Gonek \cite{Gonek1} estimated the truncation error for
$\sigma \ge \tfrac{1}{2} + \tfrac{1}{\log N}$.
Simply borrowing this result we thus expect something not larger than
\beq
\label{errGonek}
R_N(s,\chi) \ll N^{\tfrac{1}{2}-\sigma}\( 
\log t + \dfrac{\log t}{\log N}\) + \dfrac{N}{t^2 \log^2 N} \sim
N^{\tfrac{1}{2}-\sigma}\log t,
\eeq
where in the last step  we assumed $t< N\le t^2$ for both $N$ and $t$ large.
We see that for $\sigma > 1/2$ the error vanishes in the limit of large $t$ if
$N \sim t^2$.\footnote{Subsequent to this work,
an independent estimate of $R_N$ was made in \cite{ALzeta}.}

Let us clarify why the cutoff is related to the existence of a pole
at $s=1$.
First of all,  due to the pole,  $\log L(s, \chi)$
diverges at $s=1$,  and thus the series $X(s,\chi)$ in \eqref{PDir}
cannot converge for
$\sigma < 1$ for any $t$  since regions of convergence are half-planes.
For non-principal Dirichlet we have the series
$\sum_{n} \cos\left( \theta_{p_n} - t \log p_{n} \right) p_n^{-\sigma}$.
Note that even for $t=0$, the terms $\{ \cos\theta_{p_n} \}$ create enough
randomness automatically regularizing the divergent
series $\sum_n p_n^{-\sigma}$ in the region $1/2 < \sigma \le 1$.
Intuitively, it
is the existence of these terms which prevent the existence of poles
on the real line. For principal Dirichlet these terms are abscent, and
we have the series $\sum_{n} \cos\left( t \log p_n \right) p_n^{-\sigma}$.
For $t=0$ this series is divergent on the region $0 < \sigma \le 1$,
and the pole at $s=1$ is a consequence of this since the $L$-function
can be analytically continued elsewhere. On the real line, $t=0$, nothing
can be done about the divergence of this series.
However, for large enough $t$ we can still
use the terms $\{ \cos\left( t \log p_n\right) \}$ as regularizers
at the price of introducing a cutoff $N_c=O(t^2)$.
As explained through \eqref{errGonek}, this regularization becomes arbitrarily
close to the original series $X(s,\chi)$ in the limit $N_c, t \to \infty$.
In other words, although formally divergent, if we stay away from the
pole, $t \to \infty$,
for all practical purposes the Euler product is still
valid on the right-half part of the critical
strip\footnote{The issues discussed in this section were
subsequently studied in more detail in \cite{ALzeta}.}.

If the above ideas
can be shown to be correct in an unconditional and rigorous manner,
then it suggests
the following approach to the Riemann Hypothesis in this case.
It is already known that
the non-trivial zeros are on the critical line up to at
least $t\sim 10^{10}$ based on
numerical work.
One can then use the asymptotic validity of the Euler product formula to
rule out zeros to the right of the critical line at higher $t$.

\section{Concluding remarks}

The main goal of this paper was to identify precisely what properties of
the prime numbers are
responsible for the validity of certain generalized Riemann
Hypotheses. We concluded that it is
their pseudo-random behavior,  which is  a consequence of their
multiplicative independence. This  strongly suggests  that
trigonometric sums over
primes of multiplicative functions behave like random walks, and thus
are bounded by the typical $\sqrt{N}$ growth,  which led us to propose
\mbox{Conjectures~\ref{ConjDir}~and~\ref{ConjCusp}}.
These conjectures were the only unproven assumptions of this paper,  although
we provided strong motivating arguments for their validity.

From these random walk properties, we proved in Theorem~\ref{DirConvergence}
that the Euler
product for non-principal Dirichlet $L$-functions converges to the right
of the critical line. In Theorem~\ref{CuspConvergence} we proved
that the same holds for  $L$-functions based on cusp forms, where Deligne's
result \eqref{Deligne} also plays a major role.
This indicates there is some universality
to our approach.
The original Riemann Hypothesis for $\zeta(s)$ corresponds to the trivial
principal Dirichlet character and is thus not subsumed.
However, in the last section we suggested  how to extend these arguments to
such a case, which is more subtle due to the simple pole at $s=1$.

A vast generalization of Hecke's theory of $L$-functions based on modular
forms is the Langlands program \cite{Langlands}.
There, the $L$-functions are those of Artin,
which are based on Galois number field extensions of the rational numbers.
They have Euler products and satisfy functional equations,  like the cases
studied in this paper.
Langlands automorphic forms play the role of modular forms.
There also exists the notion of a cuspidal form.
It would be very interesting to try and extend the ideas in this paper
to study which of these $L$-functions,  if any,   satisfy a Riemann Hypothesis.

\subsection*{Acknowledgments}
\vspace{-1em}
We would like to thank Denis Bernard,   Nelson Carella, Steve Gonek, and
Ghaith Hiary for discussions,
and the hospitality of
ENS and LPTHE in Paris.  GF thanks partial support from CNPq-Brazil.


\end{document}